\newtheorem{theorem}{Theorem}[section]
\newtheorem{lemma}[theorem]{Lemma}
\newtheorem{corollary}[theorem]{Corollary}
\newtheorem{remark}[theorem]{Remark}
\numberwithin{equation}{section}
\newtheorem{example}{Example}[section]
\newtheorem{question}{Question}
\author{Ramya Dutta and Suman Kumar Sahoo}
\newcommand{\PD}{\partial}
\newcommand{\Beq}{\begin{equation}}
\newcommand{\Eeq}{\end{equation}}
\newcommand{\beq}{\begin{equation*}}
\newcommand{\eeq}{\end{equation*}}
\newcommand{\bal}{\begin{align}}
\newcommand{\eal}{\end{align}}
\newcommand{\bp}{\begin{prob}}
	\newcommand{\ep}{\end{prob}}
\newcommand{\bpr}{\begin{proof}}
	\newcommand{\epr}{\end{proof}}
\newcommand*\Rn{\mathbb{R}^n}
\newcommand{\tn}{T\mathbb{S}^{n-1}}
\title{Symmetry from sectional integrals for convex domains}
\begin{document}
	\subjclass{Primary: 44A12, 53C65; Secondary:  65R32.}
	\keywords{ Radon transform, ray transform.}
	\email{ramya@tifrbng.res.in,suman@tifrbng.res.in}
	\address{$^\ast$Corresponding author
		\newline\indent$^\dagger$TIFR Centre for Applicable Mathematics, Sharada Nagar, Chikkabommasandra,\newline\indent\hspace{0mm} Yelahanka New Town, Bangalore, India}
	
	\begin{abstract}
		Let $\Omega$ be a bounded convex domain in $\mathbb{R}^n$ ($n \ge 2$). In this work, we prove that if there exists an integrable function $f$ such that it's Radon transform over $(n-1)$-dimensional hyperplanes intersecting the domain $\Omega$ is a strictly positive function of distance to the nearest parallel supporting hyperplane to $\Omega$, then $\Omega$ is a ball and the function $f$ is a unique radial function about the center of $\Omega$.
	\end{abstract}

	\maketitle
	
	\section{Introduction}
	
	Let $\Omega$ be a domain in $\mathbb{R}^n$ and $ f $ be an integrable function in $ \Omega$. We extend $ f $ to be identically $0$ outside of $ \Omega $. Let $ \langle ,\rangle $ denotes the usual inner product in $\Rn$. Throughout the article $\mathcal{H}^k \llcorner G$ denotes the $k$-dimensional Hausdorff measure restricted to $G$, a Borel measurable subset of $\mathbb{R}^n$ for $1 \le k \le n$. 
	
	The ray transform integrates scalar functions over straight lines. The family of oriented lines can be parametrized by the points on the manifold
	\begin{align*}
	\tn=\{(x,\xi)\in \Rn \times \Rn: \langle x,\xi \rangle=0, |\xi|=1 \} \subset  \Rn \times \Rn
	\end{align*}
	which is the tangent bundle of the unit sphere. The ray transform $I$ of an integrable function $f$ is a function defined on $\tn$ as
	\begin{align}\label{defiray}
	If(x,\xi)=\int_{-\infty}^{+\infty}f(x+t\xi) \,dt.
	\end{align}
	
	The Radon transform integrates the functions over the hyperplanes. The Radon transform $R$ of a function $f \in L^1(\Omega)$ is the function defined on $\mathbb{S}^{n-1} \times \mathbb{R}$ by 
	\begin{align}\label{defiradon}
	Rf(w,p)=\int_{\Sigma_{\omega,p}} f(x)\,d \mathcal{H}^{n-1}\llcorner \Sigma_{\omega,p}
	\end{align}
	where $\displaystyle \Sigma_{w,p}=\{ x\in \Rn | \langle x,\omega\rangle=p \} $ denotes the hyperplane with $p$ as the perpendicular distance from the origin and $\omega$ is normal to the plane. $\displaystyle If|_{\Omega}$ denotes the ray transform of $f$ along all the lines intersecting the domain $\Omega$. Similar definition stands for the notation $\displaystyle Rf|_{\Omega}$. The operators \eqref{defiray} and \eqref{defiradon} have  been well studied and has many applications in computer tomography. For more detailed study of the operators $I$ and $R$ we refer \cite{Helgason,Sh}.
	
	In dimension $n=2$ both operators $I$ and $R$ coincide. The characterization of range of these operators have been well studied in  case of Schwartz class functions $ \mathcal{S}(\Rn) $. It is known that $I$ and $R$ are linear isomorphisms between the Schwartz spaces $\mathcal{S}(\mathbb{R}^n)$ to $\mathcal{S}(\tn)$ and $\mathcal{S}(\mathbb{S}^{n-1} \times \mathbb{R})$ respectively.  Both the operators $I$ and $R$ are injective, i.e., if $If=0$ imply $f=0$ and $Rf=0$ imply $f=0$.
	
	The question one is interested in is,
	
	 \begin{question} Are non-zero constant functions in the range of ray transforms, i.e., does $\exists \, f \in L^1(\Omega)$ such that $ If|_{\Omega} = c \, (\neq 0)$? \end{question}
	
	We came to know about this problem when second named author in this paper attended a conference talk by Joonas Ilmavirta titled ``Functions of constant $X$-ray transform" based on his joint work with Gabriel Paternain in University of Jyv\"{a}skyl\"{a}, Finland on ``Inverse problems: PDE and Geometry", where they gave a positive answer to the above question (see \cite{IP}). They showed that in Euclidean spaces of dimension $n\geq 2$, strictly convex domains $\Omega$ admitting functions of constant $X$-ray transform are balls and the function is unique and radial. They also considered the analogue of the problem on Riemannian manifolds. The function $\displaystyle f(x) = \frac{\chi_{\{|x| < R\}}}{\pi\sqrt{R^2-|x|^2}}$ in the ball of radius $R$ centered at origin of $\mathbb{R}^n$ has constant X-ray transform in its support in $\mathbb{R}^n$.
	
	Similarly, one can ask the same question for the Radon transform in dimensions $n \ge 2$.
	
	\begin{question} Are non-zero constant functions in the range of Radon transforms in dimension $n \ge 2$, i.e., does $\exists \, f \in L^1(\Omega)$ such that $ Rf|_{\Omega} = c \, (\neq 0)$? \end{question}
	
	We address this question in section $2$ and show that one cannot expect constants to be in the range of Radon transform in dimensions $ n\ge 3 $. Instead we prove the following analogue. If $\Omega$ is a bounded convex domain in $\mathbb{R}^n$ ($n \ge 2$) and if $\exists \, f \in L^1(\Omega)$ such that $Rf|_{\Omega}$ is a strictly positive function of distance to nearest parallel supporting hyperplane to $\Omega$ alone [see Theorem \ref{mainthm} for precise definition], then $\Omega$ is a ball and $f$ is a unique radial function about the center of $\Omega$.
	
	The proofs utilize the zeroth and the first order moments of the function which was originally used in context of ray transform in the plane by Joonas Ilmavirta \& Gabriel Paternain.
	
	Subsequently, as a corollary to the main result in our paper, we obtain the result for the ray transform $If|_{\Omega} = c \, (\neq 0)$ in Euclidean space under milder assumptions requiring $\Omega$ to be a bounded domain in $\mathbb{R}^2$ or a bounded convex domain in $\mathbb{R}^n$ for $n \ge 3$. To our knowledge, the main result in Section 2 is completely new.
	
	\begin{remark}
		If $ \Omega $ is bounded convex domain in $\mathbb{R}^n$, any line $\ell$ intersecting $\Omega$ intersects $\PD\Omega$ at exactly two points. Also if $p \in \partial \Omega$ then there exists at least one supporting $(n-1)$-dimensional hyperplane of $\Omega$ passing through $p$.
	\end{remark}
	
	\section{radon transform as a distance function}
	
	We begin by stating the main result.
	
	\begin{theorem}\label{mainthm} Let, $\Omega \subset \mathbb{R}^n$ be a bounded convex domain ($n \ge 2$) and $G:[0,\infty) \to \mathbb{R}$ be a strictly positive locally integrable function. If, $\exists \, f : \Omega \to \mathbb{R}$ an integrable function such that, \begin{align}\label{inteplane} 
		\int_{\Omega \, \cap \, \Sigma} f(x)\,d\mathcal{H}^{n-1} \llcorner \Sigma = G\left(\min_{j=1,2}\operatorname{dist}\left(\Sigma,\Pi_j\right)\right) \end{align} for all $(n-1)$-dimensional hyperplanes $\Sigma$ satisfying $\mathcal{H}^{n-1}(\Omega \cap \Sigma) > 0$ and $\Pi_{j}$ (for, $j=1,2$) are the pair of supporting $(n-1)$-dim hyperplanes to $\Omega$ that are parallel to $\Sigma$, then $\Omega$ is a ball and $f$ is a unique radial function. \end{theorem}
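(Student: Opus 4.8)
The plan is to reduce the statement to a pair of functional equations for the width function and a zeroth-moment condition, and then exploit the rotational degrees of freedom. First I would introduce, for each direction $\omega \in \mathbb{S}^{n-1}$, the support function $h(\omega) = \sup_{x \in \Omega}\langle x, \omega\rangle$ and the width $w(\omega) = h(\omega) + h(-\omega)$; the two parallel supporting hyperplanes to $\Sigma_{\omega,p}$ are $\langle x,\omega\rangle = h(\omega)$ and $\langle x,\omega\rangle = -h(-\omega)$, so $\min_{j}\operatorname{dist}(\Sigma_{\omega,p},\Pi_j) = \min\{h(\omega)-p,\ p+h(-\omega)\}$ for $p \in (-h(-\omega), h(\omega))$. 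Thus \eqref{inteplane} reads $Rf(\omega,p) = G\!\left(\min\{h(\omega)-p,\ p+h(-\omega)\}\right)$. The right-hand side is, in the variable $p$, symmetric about the midpoint $m(\omega) := \tfrac{1}{2}(h(\omega)-h(-\omega))$ and supported on an interval of length $w(\omega)$.

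Next I would compute the zeroth and first moments in $p$, following the moment method of Ilmavirta--Paternain. Integrating \eqref{inteplane} in $p$ over $\mathbb{R}$ gives $\int_{\Omega} f\,dx = \int_{\mathbb{R}} G(\min\{h(\omega)-p, p+h(-\omega)\})\,dp = 2\int_0^{w(\omega)/2} G(s)\,ds$, which is a fixed constant $c_0$; since $G$ is strictly positive, $s \mapsto 2\int_0^{s} G$ is strictly increasing, so $w(\omega)/2$ is the same for every $\omega$. Hence the width $w(\omega) \equiv w$ is constant — $\Omega$ is a body of constant width. For the first moment, $\int_{\mathbb{R}} p\, Rf(\omega,p)\,dp = \langle \int_\Omega x f(x)\,dx, \omega\rangle$ is a linear functional of $\omega$, while by the symmetry of $G(\min\{\cdots\})$ about $p = m(\omega)$ the left-hand side equals $m(\omega)\cdot c_0$. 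Writing $b := \tfrac{1}{c_0}\int_\Omega x f\,dx$ we get $m(\omega) = \langle b,\omega\rangle$ for all $\omega$, i.e. $h(\omega) - h(-\omega) = 2\langle b,\omega\rangle$. Translating so that $b=0$, $h$ is an even function, so each chord of $\Omega$ perpendicular to $\omega$ through the center is bisected; combined with constant width this should force $\Omega$ to be the ball of radius $w/2$ centered at the origin. The cleanest way to close this: constant width $w$ forces $h(\omega)+h(-\omega)=w$, and $h$ even forces $h(\omega)=w/2$ for all $\omega$, which is exactly the support function of the ball.

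Finally, with $\Omega = B_{w/2}$, \eqref{inteplane} becomes $Rf(\omega,p) = G(w/2 - |p|)$, which is independent of $\omega$; by the injectivity of the Radon transform (and uniqueness of its inverse) there is at most one such $f$, and one checks that this $f$ is radial — either by averaging $f$ over $O(n)$ and using that the Radon transform intertwines rotations (so the $O(n)$-average of $f$ has the same Radon transform, hence equals $f$), or directly via the explicit inversion formula, since a Radon transform depending only on $p$ inverts to a function depending only on $|x|$.

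The main obstacle I anticipate is the geometric step: deducing that a convex body that simultaneously has constant width and has every central chord bisected (equivalently, even support function) must be a ball. The moment computations are robust, but making the passage from ``$h$ even and $h(\omega)+h(-\omega)\equiv w$'' to ``$h \equiv w/2$'' airtight — and handling the regularity of $h$ and the boundary pieces of the integration so that the moment identities are justified for the merely-$L^1$ datum $f$ (including verifying $x f(x) \in L^1$, which needs boundedness of $\Omega$) — is where the real care is needed. A secondary subtlety is checking that \eqref{inteplane} is genuinely imposed for \emph{all} hyperplanes meeting $\Omega$ in positive measure, including those nearly tangent to $\partial\Omega$, so that the moment integrals run over the full width interval; convexity and the remark in the introduction guarantee the requisite structure.
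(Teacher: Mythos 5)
Your proposal is correct and follows essentially the same route as the paper: the zeroth moment plus strict positivity of $G$ gives constant width, the first moment plus the symmetry of $G(\min\{\cdots\})$ about the mid-hyperplane shows the midpoints lie on a linear functional (the paper fixes the origin a priori so that this functional vanishes, whereas you translate by $b$ afterwards — a purely cosmetic difference), and radiality of $f$ follows from injectivity of the Radon transform under orthogonal maps. The geometric step you flag as the main obstacle is in fact already closed by your own observation that $h$ even together with $h(\omega)+h(-\omega)\equiv w$ forces $h\equiv w/2$, the support function of a ball.
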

	
	\begin{remark} Before going into the proof of Theorem \ref{mainthm} let us address Question $2$.  Suppose, $\exists \, f \in L^1(\Omega)$ such that $Rf|_{\Omega} = 1$. In view of Theorem \ref{mainthm}, letting $G \equiv 1$ we see that $\Omega$ has to be a ball, without loss of generality assume $\Omega = B(0,R) \subset \mathbb{R}^n$ for some $R > 0$. Let us consider the implication of this result on the Fourier slices of the function $f$. Given $\xi \in \mathbb{S}^{n-1}$, let us choose an orthogonal coordinate frame for the space $\mathbb{R}^n$ s.t., $x = (t,x') \in \mathbb{R}_{\xi} \times \mathbb{R}^{n-1}_{\xi^\perp}$ and use the notation $\displaystyle \Sigma^t_\xi$ to denote the $(n-1)$-dimensional hyperplane having $\xi \in \mathbb{S}^{n-1}$ as the normal and passing through the point $(t,0') \in \mathbb{R}_\xi \times\mathbb{R}^{n-1}_{\xi^\perp}$. Now, by Fubini's theorem we have, \begin{align*} \mathcal{F}(f)(\xi) = \int_{\mathbb{R}^n} e^{-i\left<x,\xi\right>}f(x)\,dx = \int_{\mathbb{R}_\xi} e^{-i|\xi| t}\left( \int_{\mathbb{R}_{\xi^\perp}^{n-1}} f(t,x')\,d \mathcal{H}^{n-1} \llcorner \Sigma_{\xi}^t(x')\right)\,dt = \int_{-R}^{R} e^{-i|\xi| t}\,dt = \frac{2\sin(R|\xi|)}{|\xi|}. \end{align*} where, in the third equality we used hypothesis (\ref{inteplane}) with $G \equiv 1$.
	
	It is known that for $n \ge 2$, \begin{align*} \mathcal{F}^{-1}\left(\frac{e^{it|\xi|}}{|\xi|}\right)(x) = \begin{cases} \dfrac{1}{2\pi}\operatorname{sgn}(t) \chi_{\{|x| < |t|\}} (t^2 - |x|^2)^{-\frac{1}{2}}, & \text{ when, } n = 2 \\ & \\ C_n\lim\limits_{\epsilon \downarrow 0} \left(|x|^2 - (t+i\epsilon)^2\right)^{-\frac{n-1}{2}}, & \text{ when, } n \ge 3 \end{cases} \end{align*} where, $\displaystyle C_n=\frac{\Gamma\left(\frac{n+1}{2}\right)}{(n-1)\pi^{\frac{n+1}{2}}}$ and $t \in \mathbb{R}$, which is integrable when $ n=2 $ but certainly not in $L^1(\mathbb{R}^n)$ for $n \ge 3$. Therefore, there does not exist integrable functions such that $Rf|_\Omega=c(\ne 0) $ in dimensions $n \ge 3$.
	\end{remark}
	    
	    \begin{proof}[Proof of Theorem \ref{inteplane}]
	    
	    Let $\Omega$ be a bounded convex domain in $\mathbb{R}^n$ ($n \ge 2$).	
		Let us fix an orthogonal coordinate system such that the origin is equidistant from each pair of parallel supporting $(n-1)$-dimensional hyperplanes to $\Omega$ sharing $x_j$-axis as their common normal, for $j = 1(1)n$.  Let us extend the function $f$ in $\Omega^c$ by $0$ and consider the restriction of first moment of $f$ on the unit sphere in $\mathbb{R}^n$, \begin{align} g(\xi) &:= \int_{\mathbb{R}^n} \left<x,\xi\right>f(x)\,dx, \, \text{ for } \, \xi \in \mathbb{S}^{n-1} \\&= \int_{\mathbb{R}^{n-1}_{\xi^\perp}}\int_{\mathbb{R}_{\xi}} t f(t, x')\,dt\,d\mathcal{H}^{n-1} \llcorner \Sigma^t_\xi(x') \end{align} where, we decomposed $x = (t,x') \in \mathbb{R}_{\xi} \times \mathbb{R}^{n-1}_{\xi^\perp}$ and in our notation $\displaystyle \Sigma^t_\xi$ denotes the $(n-1)$-dimensional hyperplane having $\xi \in \mathbb{S}^{n-1}$ as the normal and passing through the point $(t,0') \in \mathbb{R}_\xi \times R^{n-1}_{\xi^\perp}$. Let  $\Pi_j^\xi$ for $j=1,2$ be the pair of supporting $(n-1)$-dim hyperplanes of $\Omega$ orthogonal to $\xi$, so that if $\displaystyle \, r_1(\xi) := \inf\left\{ t\in \mathbb{R}_{\xi} : \mathcal{H}^{n-1}(\Sigma^{t}_\xi \cap \Omega) > 0\right\}$ and $\displaystyle \, r_2(\xi) := \sup\left\{ t\in \mathbb{R}_{\xi} : \mathcal{H}^{n-1}(\Sigma^{t}_\xi \cap \Omega) > 0\right\}$ then $\displaystyle \Pi_j^\xi = \Sigma_{\xi}^{r_j(\xi)}$ for $j=1,2$.  Now, by given condition \eqref{inteplane} we have, \begin{align} \int_{\mathbb{R}^{n-1}_{\xi^\perp}} f(t, x')\,d\mathcal{H}^{n-1} \llcorner \Sigma^t_\xi(x') = G\left(\min_{j=1,2}\operatorname{dist}\left(\Sigma^t_\xi,\Pi^\xi_j\right)\right)\end{align} for  $t \in \mathbb{R}_{\xi}$ such that the plane $\Sigma^t_\xi$ satisfies $\mathcal{H}^{n-1}(\Sigma^{t}_\xi \cap \Omega) > 0$.  Using equation $(2.4)$ in equation $(2.3)$ and writing $r_j = r_j(\xi)$ ($j = 1,2$) for brevity we have, \begin{align}\label{firstmoment}
		 \nonumber g(\xi) &= \int_{\mathbb{R}_{\xi}} t \left(\int_{\mathbb{R}^{n-1}_{\xi^\perp}} f(t, x') \,d\mathcal{H}^{n-1} \llcorner \Sigma^t_\xi(x')\right)\,dt \nonumber\\&= \int_{r_1}^{r_2} t \left(\int_{\mathbb{R}^{n-1}_{\xi^\perp}} f(t, x') \,d\mathcal{H}^{n-1} \llcorner \Sigma^t_\xi(x')\right) \,dt \nonumber\\&= \int_{\frac{r_2+r_1}{2}}^{r_2} t \, G(r_2-t) \,dt + \int_{r_1}^{\frac{r_2+r_1}{2}} t \, G(t-r_1) \,dt \nonumber\\&= \int_{0}^{\frac{r_2-r_1}{2}} (r_2-t) \, G(t) \,dt + \int_{0}^{\frac{r_2-r_1}{2}} (t+r_1) \, G(t) \,dt \nonumber\\&= (r_2(\xi) + r_1(\xi))\int_{0}^{\frac{r_2-r_1}{2}(\xi)} G(t) \,dt. \end{align} 
		
	     Similarly we have, \begin{align}\label{areacalcu} 
     	\nonumber	K = \int_{\Omega} f(x)\,dx &= \int_{\mathbb{R}^{n-1}_{\xi^\perp}}\int_{\mathbb{R}_{\xi}} f(t, x')\,dt\,d\mathcal{H}^{n-1} \llcorner \Sigma^t_\xi(x') \nonumber\\&= \int_{r_1(\xi)}^{r_2(\xi)} \left(\int_{\mathbb{R}^{n-1}_{\xi^\perp}} f(t, x') \,d\mathcal{H}^{n-1} \llcorner \Sigma^t_\xi(x')\right)\,dt \nonumber\\&= 2\int_{0}^{\frac{r_2-r_1}{2}(\xi)} G(t) \,dt, \, \forall \, \xi \in \mathbb{S}^{n-1}.\end{align} 
	
		Therefore, $\displaystyle g(\xi) = \frac{K}{2}(r_2(\xi) + r_1(\xi))$. We note that by our choice of origin we have $r_2(\pm e_j) + r_1(\pm e_j) = 0$ and hence $g(\pm e_j) = 0$ for each $j = 1(1)n$ where, $e_j$ is the unit vector along $x_j$-axis. By definition $g$ is restriction of a linear function on $\mathbb{S}^{n-1}$ and $g(\pm e_j) = 0$ for $j=1(1)n$. Therefore, $g \equiv 0$ in $\mathbb{S}^{n-1}$ and hence, $r_2(\xi) = -r_1(\xi), \, \forall \, \xi \in \mathbb{S}^{n-1}$.
		
		Now, the function $\displaystyle u(r) = \int_0^r G(t)\,dt$ is injective (since, $G > 0$ by hypothesis). From equation $(2.6)$ we have, $\displaystyle u\left(\frac{r_2(\xi)-r_1(\xi)}{2}\right) = \frac{K}{2}$, for all $\xi \in \mathbb{S}^{n-1}$, therefore we conclude $r_2(\xi) - r_1(\xi) = 2R$ (for some positive constant $R$). That is the domain $\Omega$ has constant width in the sense that the distance between any pair of parallel $(n-1)$-dim supporting hyperplanes of $\Omega$, with common normal $\xi \in \mathbb{S}^{n-1}$, is a constant and equals $2R$.
		
		Combining these two facts we see that $|r_2(\xi)| = |r_1(\xi)| = R$ for all $\xi \in \mathbb{S}^{n-1}$, i.e., the distance of any $(n-1)$-dim supporting hyperplane of $\Omega$ from origin is constant and equals $R$. 
		
		Now, by convexity of $\Omega$ it must lie in the intersection of all closed half-spaces $\overline{H}_\xi$ containing origin whose boundaries $ \partial H_\xi$  are  $(n-1)$-dimensional supporting hyperplanes of $\Omega$ (with $\xi \in \mathbb{S}^{n-1}$ as outward normal to $\partial H_\xi$). Since, $\operatorname{dist}(0, \partial H_\xi) = R$, we conclude $\displaystyle \Omega \subseteq \bigcap\limits_{\xi \in \mathbb{S}^{n-1}} \overline{H}_\xi = \overline{B(0,R)}$. On the other hand suppose $p \in \partial \Omega$ and $\Pi_p$ is a supporting $(n-1)$-dimensional hyperplane of $\Omega$ passing through $p$, then $\operatorname{dist}(0,p) \ge \operatorname{dist}(0,\Pi_p) = R$. Hence, $\operatorname{dist}(0,p) = R$ for all $p \in \partial \Omega$. Consequently, $\Omega$ is the ball $B(0,R)$ in $\mathbb{R}^n$.
		
		Now, $f$ is radial follows from uniqueness of Radon transform and the fact that $\Omega$ is a ball.  We observe that the function $f \circ A$, where $A$ is any orthogonal matrix, also satisfies hypothesis (\ref{inteplane}) and hence from uniqueness of Radon transform we conclude  $ f = f \circ A $, for all orthogonal matrices $A$. Therefore $f$ is a radial function. This completes the proof.
	\end{proof}
	
        Any integrable positive radial function in a ball in $\mathbb{R}^n$ satisfies the hypothesis (\ref{inteplane}). However, the following example is particularly interesting.
	
	\begin{example}	
		The functions $\displaystyle f(x) = \left(R^2 - |x|^2\right)^\gamma$ for $\gamma > -1$ satisfies the hypothesis in (\ref{inteplane}) with, \begin{align*} \int_{\Omega \, \cap \, \Sigma^d} f \,d\mathcal{H}^{n-1}\llcorner \Sigma^d = c_n(\gamma)(R^2 - d^2)^{\frac{n-1}{2} + \gamma}  \end{align*} where, $\Sigma^d$ is a $(n-1)$-dimensional hyperplane at a distance $d$ from the origin, therefore at a distance $(R-d)$ from the nearest parallel supporting hyperplane of $\Omega = B(0,R)$ and $G(R-d) = c_n(\gamma)(R^2 - d^2)^{\frac{n-1}{2} + \gamma}$ for, $0 \le d < R$. The constant $\displaystyle c_n(\gamma) = \frac{1}{2}(n-1)\alpha_{n-1} \int_0^1 r^{\frac{n-3}{2}}(1-r)^\gamma\,dr = \frac{1}{2}(n-1)\alpha_{n-1} \beta\left(\frac{n-1}{2},\gamma+1\right)$, (where, $\alpha_{n-1}$ is the volume of $(n-1)$-dim unit ball and $\beta(x,y)$ denotes the Beta function) depends only on the dimension $n$ and $\gamma$.
	\end{example}

    Now, we obtain the results for ray transform as corollaries to Theorem \ref{inteplane}.

    \begin{corollary} Let, $\Omega$ be a bounded domain in $\mathbb{R}^2$ or a bounded convex domain in $\mathbb{R}^n$ ($n \ge 3$). If, $\exists \, $ an integrable function $f: \Omega \to \mathbb{R}$ such    that, \begin{align}\label{interela}
	\int_{\Omega \, \cap \, \ell} f(x)\,d\,\mathcal{H}^1 \llcorner \ell  = 1 \end{align} for all ($1$-dim) lines $\ell$ with $\mathcal{H}^1(\Omega \cap \ell) > 0$, then $\Omega$ is a ball and $f$ is a unique radial function about the center of $\Omega$. \end{corollary}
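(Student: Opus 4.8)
The plan is to deduce both cases of the corollary from Theorem~\ref{mainthm} applied with the constant weight $G\equiv 1$. Consider first $n=2$, where lines and hyperplanes coincide, so the only issue is to remove the convexity hypothesis. I would pass to $\widehat{\Omega}:=\operatorname{conv}(\Omega)$, which is again a bounded domain since the convex hull of an open set is open. The key observation is that for a line $\ell$ one has $\mathcal{H}^1(\ell\cap\Omega)>0$ if and only if $\mathcal{H}^1(\ell\cap\widehat{\Omega})>0$: the forward implication is trivial, and for the converse, if $\ell\cap\Omega=\emptyset$ then, writing a point of $\ell\cap\widehat{\Omega}$ as a strict convex combination $\sum\lambda_i y_i$ with $y_i\in\Omega$, the fact that this combination lies on $\ell$ forces some $y_i$ strictly on each side of $\ell$, which exhibits $\Omega$ as a disjoint union of two nonempty relatively open sets and contradicts connectedness. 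Hence the extension of $f$ by $0$ satisfies hypothesis~\eqref{inteplane} on the bounded convex domain $\widehat{\Omega}$ with $G\equiv 1$, and Theorem~\ref{mainthm} gives that $\widehat{\Omega}$ is a ball $B(x_0,R)$ and $f$ is the unique radial function with constant Radon transform there, namely $f(x)=\bigl(\pi\sqrt{R^2-|x-x_0|^2}\,\bigr)^{-1}$. This function is positive almost everywhere on $B(x_0,R)$, while $\operatorname{supp} f\subseteq\overline{\Omega}\subseteq\overline{B(x_0,R)}$, so $\overline{\Omega}=\overline{B(x_0,R)}$; thus $\Omega$ agrees with $B(x_0,R)$ up to a Lebesgue-null set and $f$ is the asserted radial function.

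For $n\ge 3$ the domain $\Omega$ is convex, so every affine $2$-plane $P$ meets it in a bounded convex planar domain $\Omega\cap P$ (empty or a point if $P$ misses or is tangent to $\Omega$). By Fubini's theorem $f|_P\in L^1(\Omega\cap P,\mathcal{H}^2)$ for almost every $P$, and for any line $\ell\subset P$ one has $If(\ell)=\int_{\ell} f|_P\,d\mathcal{H}^1$, which by~\eqref{interela} equals $1$ whenever $\mathcal{H}^1\bigl(\ell\cap(\Omega\cap P)\bigr)>0$. So $f|_P$ satisfies the hypothesis of Theorem~\ref{mainthm} inside $P\cong\mathbb{R}^2$ (again with $G\equiv 1$), and therefore $\Omega\cap P$ is a disk and $f|_P$ is radial about its centre. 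Since the section $\Omega\cap P$ depends continuously on $P$ in the Hausdorff metric and ``being a disk'' is a closed condition among planar convex bodies with nonempty interior, the same conclusion holds for \emph{every} $2$-plane $P$ meeting the interior of $\Omega$. It then remains to invoke the classical fact that a convex body in $\mathbb{R}^n$ ($n\ge 3$) all of whose full-dimensional $2$-plane sections are disks is a ball (for $n=3$ this is the characterization of the sphere as the only closed convex surface whose every planar cross-section is a circle; the case $n\ge 4$ follows by applying it to the $3$-dimensional sections of $\Omega$). Hence $\Omega=B(x_0,R)$, and since $\Omega$ is now a ball, $f\circ A$ also satisfies~\eqref{interela} for every orthogonal map $A$ fixing $x_0$, so $f=f\circ A$ by injectivity of the ray transform; thus $f$ is radial, and it is the unique such function for the same reason.

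The substance beyond Theorem~\ref{mainthm} lies in the two geometric inputs: the convex-hull-plus-connectedness argument that removes convexity when $n=2$, and --- the step I expect to be the genuine obstacle --- the characterization of the ball by the circularity of its planar sections, together with the care needed to pass from ``almost every $2$-plane'' to ``every $2$-plane''. Everything else is routine bookkeeping with Fubini's theorem and the injectivity of the Radon transform.
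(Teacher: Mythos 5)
Your argument is correct and follows essentially the same route as the paper: reduce to Theorem \ref{mainthm} with $G\equiv 1$, handle non-convex planar domains via the convex hull, and in dimension $n\ge 3$ reduce to the statement that a convex body all of whose two-dimensional sections are disks is a ball. The one step you leave to a classical citation --- and correctly flag as the crux --- is exactly what the paper proves as Lemma \ref{lemma}, by an elementary argument: take a chord $\ell_m$ of maximal length $2R$ with midpoint $p_c$; for any $p\in\partial\Omega$ the plane through $p$ containing $\ell_m$ meets $\Omega$ in a disk for which $\ell_m$ is still a maximal chord, hence a diameter, so the disk is centred at $p_c$ with radius $R$ and $\operatorname{dist}(p_c,p)=R$. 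Your treatment of the Fubini issue (integrability of $f$ on only almost every $2$-plane, upgraded to every plane by Hausdorff continuity of the sections) is actually more careful than the paper, which passes over this point silently.
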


    \begin{proof}
	\textbf{Case-I:} We start by analyzing the simplest case when $\Omega$ is a bounded convex domain in the $\mathbb{R}^2$. Then, as a consequence of Theorem \ref{inteplane} with $G \equiv 1$ we conclude that $\Omega$ is a disk in $\mathbb{R}^2$.
	
	The function $\displaystyle f(x) = \frac{\chi_{\{|x| < R\}}}{\pi \sqrt{R^2-|x|^2}}$ satisfies the hypothesis \eqref{interela} in the domain $\Omega = B(0,R) \subset \mathbb{R}^2$ and is the required radial function. Uniqueness of $f$ follows from injectivity of ray transform.
	
	Now, for a general bounded domain $\Omega \subset \mathbb{R}^2$, we note that requiring a line $\ell$ to intersect $\Omega$ i.e., $\mathcal{H}^1(\Omega \cap \ell) > 0$ is equivalent to $\mathcal{H}^1(\operatorname{Conv}(\Omega) \cap \ell) > 0$, where $\operatorname{Conv}(\Omega)$ denotes the convex hull of $\Omega$. Suppose, $\exists \, f \in L^1(\Omega)$ satisfying hypothesis \eqref{interela} and $\operatorname{Conv}(\Omega)\setminus \Omega \neq \phi$. Then, from the previous consideration, $\operatorname{Conv}(\Omega)$ is a ball and $f$ is as before, which would contradict that fact that $f \equiv 0$ on $\operatorname{Conv}(\Omega)\setminus \Omega$. Therefore, $\operatorname{Conv}(\Omega) = \Omega$.
	
	\textbf{Case-II:} Now, we address the case of bounded convex domains $\Omega$ in $\mathbb{R}^n$ when, $n \ge 3$.
	
	If, $f$ is a function satisfying \eqref{interela} in a convex set $\Omega \subset \mathbb{R}^n$, then every $2$-dimensional section of $\Omega$ i.e., $\Omega \cap \Pi$ is convex (where, $\Pi$ is a $2$-dimensional plane intersecting $\Omega$ s.t., $\mathcal{H}^2(\Omega \cap \Pi) > 0$) and admits the function $f$. From the previous case ($n=2$) we conclude that the $2$-dimensional section $\Omega \cap \Pi$ must be equivalent to a disk in $\mathbb{R}^2$. Therefore by Lemma \ref{lemma} the domain $\Omega$ admitting $f$ is a ball $B(p_c,R)$ (say) and $\displaystyle f(x) = \frac{\chi_{\{|x-p_c| < R\}}}{\pi \sqrt{R^2-|x-p_c|^2}}$ satisfying the hypothesis \eqref{interela} is the unique radial function. \end{proof}

    \begin{lemma}\label{lemma}
	Let, $n \ge 3$ and $\Omega$ be a bounded convex domain in $\mathbb{R}^n$ such that $\Omega \cap \Pi$ is equivalent to a disk in $\mathbb{R}^2$ for all $2$-dimensional planes $\Pi$ with $\mathcal{H}^2(\Omega \cap \Pi) > 0$, then $\Omega$ is a ball in $\mathbb{R}^n$. \end{lemma}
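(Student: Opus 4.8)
The plan is to show that if every planar section of a convex body is a round disk, then the body is a ball, by pinning down the center. First I would fix any two distinct points $p, q \in \Omega$ and consider the two-dimensional plane $\Pi_{p,q}$ through $p$, $q$, and a third point (chosen so the section has positive area). By hypothesis $\Omega \cap \Pi_{p,q}$ is a genuine $2$-disk, so it has a well-defined center $c(\Pi_{p,q})$ and radius $r(\Pi_{p,q})$. The key observation is a compatibility/consistency statement: for a fixed point $x_0$ in the interior of $\Omega$, all planar sections through $x_0$ must have centers that are forced to coincide. Indeed, if $\Pi$ and $\Pi'$ are two planes through $x_0$, their intersection $\Pi \cap \Pi'$ is a line $\ell$ through $x_0$; this line meets $\Omega$ in a chord which is simultaneously a chord of the disk $\Omega \cap \Pi$ and of the disk $\Omega \cap \Pi'$. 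Using that the perpendicular bisector of a chord of a disk passes through its center, one gets linear constraints relating $c(\Pi)$ and $c(\Pi')$; intersecting enough such planes through a common point shows there is a single point $p_c \in \mathbb{R}^n$ that serves as the center of \emph{every} planar section passing near it.

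The main step is then to upgrade ``every section has a center'' to ``there is one common center $p_c$ and one common radius $R$.'' Concretely, I would argue as follows. Take $p_c$ to be the center of one fixed two-dimensional section $D_0 = \Omega \cap \Pi_0$, with radius $R$. For any unit vector $v$ not parallel to $\Pi_0$, the plane $\Pi_v$ spanned by $\Pi_0 \cap \Pi_v$-type lines can be arranged to contain $p_c$; more simply, for any line $\ell$ through $p_c$ lying in $\Pi_0$, and any plane $\Pi$ containing $\ell$, the chord $\ell \cap \Omega$ is a diameter of $D_0$ (length $2R$, bisected at $p_c$) and is also a chord of the disk $\Omega \cap \Pi$; since it is bisected at $p_c$ and a chord of a disk bisected at an interior point through which another chord of the same length $2R$ passes forces — once we also use a second such line — that $p_c$ is the center of $\Omega \cap \Pi$ and its radius is $R$. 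Rotating $\ell$ within $\Pi_0$ sweeps out a family of planes $\Pi$ whose union contains a neighborhood of $p_c$ in $\mathbb{R}^n$, and every point of $\partial\Omega$ lies on some section $\Omega \cap \Pi$ through $p_c$; hence $\operatorname{dist}(x, p_c) = R$ for every $x \in \partial\Omega$, so $\Omega = B(p_c, R)$.

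The hard part will be the bookkeeping in the previous paragraph: making sure that the planes $\Pi$ through a fixed line $\ell$ really do sweep out every boundary point of $\Omega$ (this uses convexity and that $p_c$ is interior, so every boundary point $x$ together with $\ell$ spans — or is contained in — a $2$-plane meeting $\Omega$ in positive area), and handling the degenerate configurations where the chosen points are collinear or the section has zero area. A clean way to organize this is: (i) show any planar section through an interior point $p_c$ which is the center of \emph{some} fixed section is again centered at $p_c$ with the same radius $R$, by a two-chord argument as above; (ii) deduce $\partial\Omega \subseteq \partial B(p_c,R)$; (iii) invoke connectedness of $\partial\Omega$ (or simply that $\Omega$ is open, bounded, convex with boundary in a single sphere) to conclude $\Omega = B(p_c, R)$. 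Steps (ii)–(iii) are routine once (i) is in place, so (i) — the rigidity of the center under rotating the section plane about a common chord — is the crux.
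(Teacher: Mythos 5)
There is a genuine gap, and it sits exactly at the step you identify as the crux. Your step (i) --- ``any planar section through a point $p_c$ which is the center of \emph{some} fixed section $D_0=\Omega\cap\Pi_0$ is again centered at $p_c$ with the same radius'' --- is false for a generic choice of $\Pi_0$. Take $\Omega=B(0,1)\subset\mathbb{R}^3$ and $\Pi_0=\{z=\tfrac12\}$, so $D_0$ is a disk of radius $\tfrac{\sqrt3}{2}$ centered at $p_c=(0,0,\tfrac12)$. The plane $\Pi=\{y=0\}$ contains the diameter $\ell=\{y=0,\,z=\tfrac12\}$ of $D_0$ through $p_c$, yet $\Omega\cap\Pi$ is the unit disk centered at the origin, not at $p_c$, and has a different radius. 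The two-chord argument cannot rescue this: the chord $\ell\cap\Omega$, viewed inside $\Pi$, only tells you that the center of $\Omega\cap\Pi$ lies on the perpendicular bisector of that chord within $\Pi$ (one linear condition), and the ``second such line'' $\ell'\subset\Pi_0$ you invoke does not lie in $\Pi$, so it contributes no second chord of $\Omega\cap\Pi$. The same objection applies to the ``compatibility of centers'' claim in your first paragraph: two sections through a common interior point need not have the same center, as the example shows.

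The missing idea is to make the initial chord \emph{extremal} rather than an arbitrary diameter of an arbitrary section. The paper takes $\ell_m$ with $\mathcal{H}^1(\Omega\cap\ell_m)=\sup_{\ell}\mathcal{H}^1(\Omega\cap\ell)=2R$ and sets $p_c$ to be its midpoint. For any $2$-plane $\Pi$ containing $\ell_m$, the section $\Omega\cap\Pi$ is a disk whose diameter is at least the length of its chord $\ell_m\cap\Omega$, namely $2R$, and at most $2R$ by maximality of $\ell_m$ among all chords of $\Omega$; hence that chord \emph{is} a diameter of the section, so the section is the disk of radius $R$ centered at $p_c$. Since every boundary point $p\notin\ell_m$ lies in some such plane (the one spanned by $\ell_m$ and $p$), one gets $\operatorname{dist}(p,p_c)=R$ for all $p\in\partial\Omega$ and concludes. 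Your steps (ii)--(iii) are indeed routine, but (i) must be replaced by this maximality argument; as written, your proof does not go through.
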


    \begin{proof} Let, $\ell_m$ be a maximal diametric line in $\Omega$ i.e., $\displaystyle \mathcal{H}^1(\Omega \cap \ell_m) = \sup_{\ell} \mathcal{H}^1(\Omega \cap \ell) = 2R$ (say) and let us denote the midpoint of $\ell_m \cap \partial \Omega$ with $p_c$. Let $p$ be any point on $\partial \Omega \setminus \{\ell_m \cap \partial \Omega\}$ and $\Pi_p$ be a $2$-dimensional plane passing through $p$ and containing the line $\ell_m$. Then, $\Pi_p$ must intersect the domain $\Omega$ in a set equivalent to a $2$-dimensional disk of radius $R$ by the maximality of $\mathcal{H}^1(\Omega \cap \ell_m)$. Therefore, $\operatorname{dist}(p_c,p) = R$ for all $p \in \partial \Omega$, i.e., $\Omega$ is the ball $B(p_c,R)$ in $\mathbb{R}^n$. This completes the proof of the lemma. \end{proof}

    In case $n \ge 3$ and $\Omega$ is a strictly convex domain, we note that information on lines close enough to the boundary of $\Omega$ is sufficient to conclude that $\Omega$ is a ball.

    \begin{corollary} Let, $n \ge 3$ and $\Omega$ be a strictly convex domain. Suppose, for every $p \in \partial \Omega$ there is a neighbourhood $\mathcal{U}_p \, (\subset \mathbb{R}^n)$ of $p$ s.t., hypothesis \eqref{interela} holds for all lines $\ell$ with $\Omega \cap \ell \subset \mathcal{U}_p$ and $\mathcal{H}^1(\Omega \cap \ell) > 0$. Then, $\Omega$ is a ball. \end{corollary}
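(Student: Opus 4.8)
The plan is to reduce to two results already in hand — the planar ($n=2$) case of the preceding corollary (its Case-I) and Lemma \ref{lemma} — the bridge being the observation that every sufficiently small plane section of $\Omega$ that touches $\partial\Omega$ must be a round disk. Concretely, since $\partial\Omega$ is compact and the sets $\mathcal{U}_p$ are open, there is $\epsilon_0>0$ such that $B(q,\epsilon_0)\subset\mathcal{U}_{p(q)}$ for some $p(q)\in\partial\Omega$, for every $q\in\partial\Omega$. If $\Pi$ is a $2$-plane with $\mathcal{H}^2(\Omega\cap\Pi)>0$, with $\overline{\Omega\cap\Pi}\cap\partial\Omega\neq\emptyset$ and $\operatorname{diam}(\Omega\cap\Pi)<\epsilon_0$, I pick $q\in\overline{\Omega\cap\Pi}\cap\partial\Omega$; then $\Omega\cap\Pi\subset B(q,\epsilon_0)\subset\mathcal{U}_{p(q)}$, so for \emph{every} line $\ell\subset\Pi$ one has $\Omega\cap\ell\subset\mathcal{U}_{p(q)}$ and hence $\int_{\Omega\cap\ell}f\,d\mathcal{H}^1=1$; thus the bounded convex planar domain $\Omega\cap\Pi$, together with $f|_{\Omega\cap\Pi}$, satisfies hypothesis \eqref{interela} for \emph{all} lines in $\Pi$, and Case-I forces $\Omega\cap\Pi$ to be a round disk. (Along any smooth family of such planes, $f|_{\Omega\cap\Pi}\in L^1(\Omega\cap\Pi)$ for a.e.\ $\Pi$ by Fubini, and this extends to the whole family since ``being a round disk'' is a closed condition and $\Omega\cap\Pi$ varies continuously with $\Pi$.)

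Next I would upgrade Step 1 to: $\partial\Omega$ is, in a neighbourhood of each of its points, a piece of a sphere. Fix $p_0\in\partial\Omega$. Strict convexity rules out a flat boundary portion, and a corner of $\Omega$ at $p_0$ is impossible — a small $2$-plane near $p_0$ crossing the corner transversally would produce a small section with a corner, contradicting Step 1 — so $\partial\Omega$ is, near $p_0$, the graph $x_n=\phi(x')$ over its (now unique) supporting hyperplane there, with $\phi$ convex, $\phi(0)=0$, $\nabla\phi(0)=0$ in coordinates with $p_0=0$ and $e_n$ the inward normal. For small $h>0$ the sections $\{x_n=h\}\cap\Omega=\{\phi<h\}$, and the slightly tilted small sections near $p_0$, are round disks by Step 1. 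Rescaling $x'=\sqrt{h}\,y'$ and letting $h\downarrow 0$ — the rescaled sections remain round disks, and a bounded Hausdorff limit of round disks with non-empty interior is again a round disk — one finds that the second-order behaviour of $\phi$ at $0$ is isotropic and that these nested round sections fit onto a common sphere $\partial B(c_{p_0},R_{p_0})$ with which $\partial\Omega$ agrees near $p_0$; in particular $p_0$ is an umbilic point.

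To conclude, I would argue in either of two ways. First route: every point of $\partial\Omega$ is umbilic and $\dim\partial\Omega=n-1\ge 2$ — this is exactly where the hypothesis $n\ge 3$ is used — so the Codazzi equations force the umbilical radius to be locally, hence (by connectedness) globally, constant, and the compact connected hypersurface $\partial\Omega$ is a round sphere, so $\Omega$ is a ball. Second route, avoiding hypersurface theory: for any $2$-plane $\Pi$ with $\mathcal{H}^2(\Omega\cap\Pi)>0$, Step 2 makes $\partial\Omega\cap\Pi$ near each of its points $q$ an arc of the genuine circle $\partial B(c_q,R_q)\cap\Pi$, and since a positive-length circular arc determines its circle these arcs patch into a single circle; thus $\Omega\cap\Pi$ is a disk for every such $\Pi$, and Lemma \ref{lemma} gives that $\Omega$ is a ball, after which $f$ is the unique radial function of the previous corollary. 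I expect the genuine difficulty to be Step 2: converting ``every small section near $p_0$ is exactly round'' into the infinitesimal statement that $\partial\Omega$ osculates a fixed sphere there, which requires establishing the right pointwise regularity of $\partial\Omega$ at $p_0$ and controlling the limiting quadratic form; by contrast the a.e.-in-$\Pi$ integrability bookkeeping in Step 1 is routine.
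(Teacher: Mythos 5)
Your proposal follows the same overall route as the paper: apply the planar case to small plane sections of $\Omega$ hugging the boundary to conclude they are round disks, deduce that every boundary point is umbilic, and conclude that $\partial\Omega$ is a sphere. The paper compresses the middle step into a single sentence, asserting that the Dupin indicatrix at each $p$ is a circle --- which tacitly presupposes that $\partial\Omega$ is $C^2$, an assumption not present in the hypotheses. You are more candid on exactly this point: your Step 2 (extracting umbilicity and an osculating sphere from the exact roundness of nearby sections via the $\sqrt{h}$-rescaling) is where the real work lies, you correctly flag it as the difficulty, and it is precisely the step the paper leaves unjustified. Two of your additions genuinely improve on the printed proof: the compactness/Lebesgue-number argument producing a uniform $\epsilon_0$ so that every sufficiently small section touching $\partial\Omega$ lies in some $\mathcal{U}_p$ (the paper only considers sections inside a single $\mathcal{U}_p$), and the observation that $f|_{\Omega\cap\Pi}$ need only be integrable for a.e.\ plane $\Pi$, the exceptional planes being recovered because roundness survives Hausdorff limits --- the paper silently assumes slicewise integrability. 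Your second concluding route, patching local circular arcs into full circles so that \emph{every} $2$-plane section is a disk and then invoking Lemma \ref{lemma}, is a nice alternative that avoids the ``all-umbilic compact hypersurface is a sphere'' theorem, though it still rests on Step 2. In short: same skeleton as the paper, with the same unresolved regularity step at its core, but with the surrounding bookkeeping done more carefully.
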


    \begin{proof} Arguing as before ($n=2$ case) we have $\Omega \cap \Sigma$ is equivalent to an open disk in $\mathbb{R}^2$ for every $2$-dimensional plane $\Sigma$ s.t., $\Omega \cap \Sigma \subset \mathcal{U}_p$ and $\mathcal{H}^2(\Omega \cap \Sigma) > 0$. Therefore, the Dupin's Indicatrix (see \cite[pp. 363-365]{Coxeter}) of $p$ along any $2$-dimensional plane spanned by a pair of principal directions is a circle, meaning $p$ must be umbilical. Therefore, boundary of $\Omega$ being all umbilical must be a sphere. 
    \end{proof}
	
	\noindent
	{\bf Acknowledgements:} 
		The authors wish to thank Joonas Ilmavirta for his conference  presentation at University of Jyv\"{a}skyl\"{a}, Finland which brought their attention to this problem. The authors would also like to thank Venkateswaran P. Krishnan, Sandeep Kunnath and Sivaguru Ravisankar for fruitful discussions and enlightening conversations which helped in improving the results.

\end{document}